\newfont{\bcb}{msbm10}
\newfont{\matb}{cmbx10}
\newfont{\got}{eufm10}
\newtheorem{theorem}{Theorem}[section]
\newtheorem{proposition}[theorem]{Proposition}
\newtheorem{corollary}[theorem]{Corollary}
\theoremstyle{definition}
\theoremstyle{remark}
\newtheorem{remark}[theorem]{Remark}
\numberwithin{equation}{section}
\begin{document}

\title[H\"{o}lder and Lipschitz continuity of definable functions]{H\"{o}lder and Lipschitz continuity
       \\ of functions definable over
       \\ Henselian rank one valued fields}

\author[Krzysztof Jan Nowak]{Krzysztof Jan Nowak}


\subjclass[2000]{12J25, 13F30, 14P10}

\keywords{Henselian rank one valued fields, H\"{o}lder continuity,
uniform continuity, Lipschitz continuity, piecewise Lipschitz
continuity, extension problem}



\begin{abstract}
Consider a Henselian rank one valued field $K$ of
equi\-characteristic zero with the three-sorted language
$\mathcal{L}$ of Denef--Pas. Let $f: A \to K$ be a continuous
$\mathcal{L}$-definable (with parameters) function on a closed
bounded subset $A \subset K^{n}$. The main purpose is to prove
that then $f$ is H\"{o}lder continuous with some exponent $s\geq
0$ and constant $c \geq 0$; a fortiori, $f$ is uniformly
continuous. Further, if $f$ is locally Lipschitz continuous with a
constant $c$, then $f$ is (globally) Lipschitz continuous with
possibly some larger constant $d$. Also stated are some problems
concerning continuous and Lipschitz continuous functions definable
over Henselian valued fields.
\end{abstract}

\maketitle


\section{Introduction}

Consider a Henselian rank one valued field $K$ of
equi\-characteristic zero along with the language $\mathcal{L}$ of
Denef--Pas, which consists of three sorts: the valued field
$K$-sort, the value group $\Gamma$-sort and the residue field
$\Bbbk$-sort. The only symbols of $\mathcal{L}$ connecting the
sorts are the following two maps from the main $K$-sort to the
auxiliary $\Gamma$-sort and $\Bbbk$-sort: the valuation map $v$
and an angular component map $\overline{ac}$ which is
multiplicative, sends $0$ to $0$ and coincides with the residue
map on units of the valuation ring $R$ of $K$. The language of the
$K$-sort is the language of rings; that of the $\Gamma$-sort is
any augmentation of the language of ordered abelian groups (with
$\infty$); finally, that of the $\Bbbk$-sort is any augmentation
of the language of rings. Throughout the paper the word
"definable" means "definable with parameters" and $K$-sort
variables are $x,y,z,\ldots$. We consider $K^{n}$ with the product
topology, called the $K$-topology on $K^{n}$, and adopt the
following convention:
$$ |x| = |(x_{1},\ldots,x_{n})| := \max \: \{ |x_{1}|, \dots,
   |x_{n}| \}
$$
and
$$ v(x) = v(x_{1},\ldots,x_{n}) := \min \: \{ v(x_{1}), \dots,
   v(x_{n}) \}
$$
for $x = (x_{1},\ldots,x_{n}) \in K^{n}$.

\vspace{1ex}

The main purpose of this paper is to prove the following two
theorems on H\"{o}lder and Lipschitz continuity.

\begin{proposition}\label{Hol}
Let $f: A \to K$ be a continuous $\mathcal{L}$-definable function
$f: A \to K$ on a closed bounded subset $A \subset K^{n}$. Then
$f$ is H\"{o}lder continuous with some exponent $r\geq0$ and
constant $c \geq0$, i.e.\
$$ |f(x) - f(z)| \leq c |x-z|^{r} $$
for all $x,z \in A$.
\end{proposition}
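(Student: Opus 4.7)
\noindent\emph{Proof plan.} The plan is to translate H\"{o}lder continuity into an affine inequality on a definable subset of $\Gamma^{2}$ and then exploit the tame (piecewise affine) structure of definable sets in the rank one value group $\Gamma \hookrightarrow \matR$. Consider the $\mathcal{L}$-definable set
$$E := \bigl\{\, (v(x-z),\, v(f(x)-f(z))) : x,z \in A,\ x \neq z \,\bigr\} \subseteq \Gamma \times (\Gamma \cup \{\infty\}),$$
so that H\"{o}lder continuity with exponent $r$ and constant $c$ amounts to the affine inequality $\beta \geq r\alpha + v(c)$ holding on all of $E$. A preliminary step is to show that $f(A)$ is bounded in $K$; by Denef--Pas cell decomposition applied to the definable continuous image of the closed bounded set $A$, together with the rank one hypothesis, this should follow and yield a uniform lower bound on the second coordinate of $E$. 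Thus the profile
$$\mu(\alpha) := \inf\,\{\, \beta : (\alpha,\beta) \in E \,\}$$
is well-defined as a map $\Gamma \to \Gamma \cup \{\infty\}$.

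By the orthogonality of the three sorts in the Denef--Pas language, $E$ is definable using only the value group sort, and in the rank one setting the ordered abelian group $\Gamma$ has the property that its definable subsets and functions are piecewise affine with rational slopes; in particular $\mu$ is piecewise affine. A definable curve-selection argument in the closed bounded set $A$, serving as a substitute for topological compactness, combined with pointwise continuity of $f$, then yields $\mu(\alpha) \to \infty$ as $\alpha \to \infty$: otherwise a definable $\Gamma$-parameterized family of pairs $(x_{\alpha}, z_{\alpha}) \in A \times A$ with $v(x_{\alpha}-z_{\alpha}) \to \infty$ and $v(f(x_{\alpha}) - f(z_{\alpha})) \leq M$ would admit a diagonal limit point, contradicting continuity of $f$ there. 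Piecewise affinity together with $\mu(\alpha) \to \infty$ then force $\mu(\alpha) \geq r\alpha + c'$ on a cofinal segment of $\Gamma$, for some rational $r > 0$ and $c' \in \Gamma$; enlarging $c'$ to absorb the remaining bounded initial segment gives the desired H\"{o}lder inequality.

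The principal obstacle is the structural theorem for $\mu$: showing that the projection through the valuation of an $\mathcal{L}$-definable subset of $K^{n} \times K^{n}$ onto $\Gamma^{2}$ is a semilinear set with rational slopes. This is exactly where Denef--Pas cell decomposition and the rank one hypothesis enter essentially, and it is the technical heart of the proof. A secondary obstacle is the definable limit argument inside $A$, which substitutes for the absence of topological compactness and rests on the good behaviour of definable functions on cells in Henselian rank one valued fields of equicharacteristic zero.
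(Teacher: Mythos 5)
Your skeleton---push everything into the value group via the set of pairs $(v(x-z),\,v(f(x)-f(z)))$, use elimination of $K$-quantifiers to see that this subset of $\Gamma^{2}$ is Presburger-definable and hence cut out by finitely many linear inequalities and congruences, and then read off an affine lower bound---is essentially the paper's argument. The paper packages it as a \L{}ojasiewicz inequality (Proposition~\ref{Loj} applied to $f(x)-f(y)$ and $g_{i}(x,y)=x_{i}-y_{i}$ on the closed bounded set $A\times A$), but the computation in $\Gamma^{2}$ is the one you describe. The genuine gap is in the step you use to show $\mu(\alpha)\to\infty$: the ``diagonal limit point'' does not exist in general. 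Over an arbitrary Henselian rank one valued field of equicharacteristic zero, closed bounded subsets of $K^{n}$ are \emph{not} compact (for $K=\mathbb{C}((t))$ the valuation ring is closed and bounded but not totally bounded, the residue field being infinite; a sequence of distinct constants has no convergent subsequence). So a definable family $(x_{\alpha},z_{\alpha})$ with $v(x_{\alpha}-z_{\alpha})\to\infty$ and $v(f(x_{\alpha})-f(z_{\alpha}))\leq M$ need not accumulate anywhere, and neither sequential compactness nor an off-the-shelf curve-selection lemma produces the point at which you want to contradict continuity. This is exactly where the paper substitutes the closedness theorem \cite[Theorem~3.1]{Now}: for each fixed $\gamma$, the set $A_{\gamma}=\{(x,z)\in A\times A:\ v(f(x)-f(z))=\gamma\}$ is closed, bounded and definable, so its image under $(x,z)\mapsto x-z$ is a closed definable subset of $K^{n}\setminus\{0\}$, whence $v(x-z)$ is bounded above on $A_{\gamma}$. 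That fiberwise bound, fed into the Presburger description of your set $E$, is what forces $\delta\leq s\gamma$ for large $\gamma$. Without the closedness theorem (or an equivalent), your qualitative step is unproved---and it, not the semilinearity of $E$ (which follows formally from Pas's elimination of $K$-quantifiers together with Presburger quantifier elimination for archimedean ordered groups), is the heart of the matter.

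Two smaller points. First, $\mu(\alpha)=\inf\{\beta:(\alpha,\beta)\in E\}$ need not take values in $\Gamma$ when $\Gamma$ is dense in $\mathbb{R}$; the paper avoids this by arguing directly with the definable set $\Lambda\subset\Gamma^{2}$ rather than with an infimum function. Second, the boundedness of $f(A)$, which you need for the uniform lower bound on the second coordinate of $E$, is itself a consequence of the closedness theorem (Proposition~\ref{bound}), not of cell decomposition alone; the same non-compactness caveat applies there.
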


\begin{proposition}\label{Lip}
Let $f: A \to K$ be an $\mathcal{L}$-definable (with parameters)
function $f: A \to K$ on a closed bounded subset $A \subset
K^{n}$. Suppose $f$ is locally Lipschitz continuous with a
constant $c\geq 0$, i.e.\ each point $a \in A$ has a neighbourhood
$U$ such that
$$ |f(x) - f(z)| \leq c |x-z| \ \  \text{for all} \ \ x,z \in U. $$
Then $f$ is (globally) Lipschitz continuous with possibly some
larger constant $d$.
\end{proposition}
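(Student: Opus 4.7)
The plan is to reduce the global Lipschitz property to the existence of a single \emph{uniform} local Lipschitz radius $\gamma_{0} \in \Gamma$, and then to handle pairs farther apart than $\gamma_{0}$ by boundedness of $f$. Local Lipschitz continuity forces $f$ to be continuous, so Proposition \ref{Hol} applies and yields H\"{o}lder continuity of $f$ on $A$; since $A$ is bounded, $M := \sup_{x \in A}|f(x)| < \infty$. After a harmless adjustment one may take $c = |c_{0}|$ for some $c_{0} \in K^{\times}$, so that the inequality $|f(x)-f(z)| \le c|x-z|$ is equivalent to the $\mathcal{L}$-definable condition $v(f(x)-f(z)) \ge v(c_{0}) + v(x-z)$. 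The key claim is then: there exists $\gamma_{0} \in \Gamma$ such that for every $a \in A$, $f$ is Lipschitz with constant $c$ on $A \cap \{x : v(x-a) \ge \gamma_{0}\}$.

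Granting the key claim, any pair $(x,z) \in A \times A$ either satisfies $v(x-z) \ge \gamma_{0}$, which puts $z$ in the good ball around $x$ and yields $|f(x)-f(z)| \le c|x-z|$ directly; or it satisfies $v(x-z) < \gamma_{0}$, which forces $|x-z| > \rho$ for a fixed $\rho > 0$ depending only on $\gamma_{0}$, whence $|f(x)-f(z)| \le 2M \le (2M/\rho)|x-z|$. Setting $d := \max(c,\,2M/\rho)$ gives the global Lipschitz estimate.

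The main obstacle is the existence of the uniform radius $\gamma_{0}$. I would introduce the definable function $\mu : A \to \Gamma$ sending $a$ to the least $\gamma$ for which Lipschitz continuity with constant $c$ holds on $A \cap \{v(x-a) \ge \gamma\}$, and seek to bound $\mu(A)$ from above. Argue by contradiction: if $\mu$ were unbounded, the $\mathcal{L}$-definable bad set
\[
B \;=\; \bigl\{(x,z) \in A \times A : v(f(x)-f(z)) < v(c_{0}) + v(x-z)\bigr\}
\]
would contain pairs with $v(x-z)$ arbitrarily large (by the ultrametric triangle inequality, any failure inside the ball $\{v(\,\cdot\,-a) \ge \gamma\}$ gives such a pair). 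Using Denef--Pas cell decomposition together with definable choice, one extracts a one-parameter definable family $(x(\gamma),z(\gamma)) \in B$ with $v(x(\gamma)-z(\gamma)) \to \infty$ inside the closed bounded set $A$, and a definable curve-selection argument should then produce an accumulation point $a \in A$ at which the local Lipschitz hypothesis fails. The delicate issue is precisely this last step: closed bounded subsets of a rank one valued field need not be compact, so the ``limit point'' must be extracted by definability-theoretic means rather than by sequential compactness. This is the technical heart of the argument, and is likely to invoke the same form of definable completeness (cell decomposition and curve selection in the Denef--Pas framework) that underpins the proof of Proposition \ref{Hol}.
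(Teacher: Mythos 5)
There is a genuine gap, and you have located it yourself: the ``key claim'' that a uniform local Lipschitz radius $\gamma_{0}$ exists is never proved. The mechanism you propose for it --- definable choice, a one-parameter family in the bad set $B$, and a ``definable curve-selection argument'' producing an accumulation point --- is precisely the nontrivial content of the proposition, and it is not supplied. In a Henselian rank one valued field that is not locally compact, a closed bounded set carries no sequential compactness, so ``extract an accumulation point'' is not an available move; the only substitute in this paper is the closedness theorem (definable images of closed bounded sets under coordinate projections are closed), and your sketch never invokes it at the decisive moment. Two smaller defects: the ``least $\gamma$'' defining $\mu(a)$ need not exist, since $\Gamma$ may be densely ordered; and the proof of Proposition \ref{Hol} does not rest on cell decomposition or curve selection but on the closedness theorem together with Presburger quantifier elimination in the $\Gamma$-sort, so the analogy you appeal to points at the wrong tool.

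For comparison, the paper's proof avoids the uniform-radius reduction entirely. It forms the $\mathcal{L}$-definable set $E \subset A \times A \times \mathbb{P}^{1}(K)$ of triples $(x,z,u)$ where $u$ is the difference quotient $(f(x)-f(z))/(x_{i}-z_{i})$ of maximal value $v(u)$ (so that $|u| = |f(x)-f(z)|/|x-z|$), takes the closure $\overline{E}$, and observes that $\overline{E}$ misses the point at infinity: near the diagonal because of local Lipschitz continuity with the fixed constant $c$, and off the diagonal because $f$ is bounded (Proposition \ref{bound}). The closedness theorem then makes $\phi(\overline{E})$ a closed subset of $\mathbb{P}^{1}(K)$ contained in $K$, hence bounded, and $d = \exp(-\delta)$ with $\delta$ a lower bound for $v$ on $\phi(\overline{E})$ is the global Lipschitz constant. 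If you wish to salvage your own route, the same tool closes your gap: consider the closure of $\{(x-z,x,z): (x,z) \in B\}$ and project to the first factor; if $v(x-z)$ were unbounded on $B$, the closed image would contain $0$, forcing $\overline{B}$ to meet the diagonal and contradicting local Lipschitz continuity at that diagonal point. As written, however, the proposal is a reduction to an unproved statement rather than a proof.
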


An immediate consequence of Proposition~\ref{Hol} is the following

\begin{corollary}
Every continuous $\mathcal{L}$-definable function $f: A \to K$ on
a closed bounded subset $A \subset K^{n}$ is uniformly continuous.
\end{corollary}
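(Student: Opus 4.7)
The plan is to deduce uniform continuity in one step from Proposition~\ref{Hol}. Fix $\varepsilon > 0$ and apply the proposition to produce constants $r \geq 0$ and $c \geq 0$ with
\[
|f(x) - f(z)| \leq c\,|x - z|^{r} \quad \text{for all } x, z \in A.
\]
If $c = 0$ then $f$ is constant and uniform continuity is automatic; otherwise, in the non-trivial case the exponent satisfies $r > 0$, and setting $\delta := (\varepsilon/c)^{1/r}$ gives $|f(x) - f(z)| \leq c|x-z|^{r} < c\delta^{r} = \varepsilon$ whenever $|x - z| < \delta$, which is exactly the required uniform continuity on $A$.

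No serious obstacle is expected here: the argument is nothing more than the standard real-analytic one-liner ``H\"older implies uniformly continuous'', transcribed into the rank one non-archimedean setting, where $|\cdot|$ denotes the real-valued absolute value induced on $K$ by the valuation $v$, so that arbitrary real powers $|x-z|^{r}$ are well defined. The only point warranting a brief verification is that the H\"older exponent $r$ furnished by Proposition~\ref{Hol} may be taken strictly positive in the non-constant case; this is either built into the proof of the proposition itself, or can be ensured a posteriori by observing that the degenerate possibility $r = 0$ would yield only the bound $|f(x) - f(z)| \leq c$ on the bounded set $A$, which does not reflect the genuine content of H\"older continuity for a non-constant $f$.
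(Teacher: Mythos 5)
Your derivation is the intended one---the paper offers no proof beyond calling the corollary ``an immediate consequence'' of Proposition~\ref{Hol}---and the $\varepsilon$--$\delta$ computation with $\delta:=(\varepsilon/c)^{1/r}$ is fine, as is the remark that real powers of $|x-z|$ make sense because the absolute value is real-valued. The one point you rightly flag, whether the exponent $r$ may be taken strictly positive, is not actually settled by your ``a posteriori'' remark: observing that $r=0$ would yield only the trivial bound $|f(x)-f(z)|\leq c$ is a reason to \emph{want} $r>0$, not an argument that it holds, and Proposition~\ref{Hol} as literally stated (with $r\geq 0$, $c\geq 0$) does not by itself imply uniform continuity. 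The gap is closed by your first alternative: in the paper, Proposition~\ref{Hol} is proved by applying the \L{}ojasiewicz inequality (Proposition~\ref{Loj}) to $f(x)-f(y)$ and $g_{i}(x,y)=x_{i}-y_{i}$, and that inequality furnishes a \emph{positive integer} $s$ with $|f(x)-f(y)|^{s}\leq c\,|x-y|$, so the H\"older exponent is $r=1/s>0$; together with your observation that $c=0$ forces $f$ to be constant, this makes your argument complete and essentially identical to the paper's implicit one.
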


Proposition~\ref{Hol} follows immediately from a version of the
\L{}ojasiewicz inequality established in Section~2, which
generalizes the version from our
paper~\cite[Proposition~9.1]{Now}. Nevertheless, its formulation
is more classical in comparison with the latter and its proof
given in Section~2 follows similar arguments. Note that one of its
basic ingredients is the closedness theorem from our
paper~\cite[Theorem~3.1]{Now}. Proposition~\ref{Lip} relies
directly on the closedness theorem. Section~3 contains the proofs
of the two main theorems.

\begin{remark}
Proposition~\ref{Hol} is a counterpart of a well known theorem
that every continuous semi-algebraic or subanalytic function on a
compact subset of $\mathbb{R}^{n}$ is H\"{o}lder continuous. That
theorem remains valid for continuous functions which are definable
in polynomially bounded o-minimal structures (cf.~\cite{Dries-M}).
We thus see that, in a sense, also in algebraic geometry over
Henselian rank one valued fields $K$, closed bounded subsets of
$K^{n}$ correspond to compact subsets of $\mathbb{R}^{n}$.
\end{remark}

A different, more delicate problem is to examine locally Lipschitz
continuous definable functions on subsets that are not closed.
Then the conclusion of global Lipschitz continuity must be
replaced by that of piecewise Lipschitz continuity. In the last
Section~4, we refer to this problem over Henselian rank one valued
fields and also state two extension problems: of Lipschitz
continuous rational functions defined on algebraic subvarieties of
$K^{n}$ as well as of continuous and Lipschitz continuous
$\mathcal{L}$-definable functions defined on closed subsets of
$K^{n}$.

\vspace{1ex}

Finally note that not all valued fields $K$ have an angular
component map, but it exists if $K$ has a cross section, which
happens whenever $K$ is $\aleph_{1}$-saturated
(cf.~\cite[Chap.~II]{Ch}). Moreover, a valued field $K$ has an
angular component map whenever its residue field $\Bbbk$ is
$\aleph_{1}$-saturated (cf.~\cite[Corollary~1.6]{Pa2}). In
general, unlike for $p$-adic fields and their finite extensions,
adding an angular component map does strengthen the family of
definable sets. For both $p$-adic fields (Denef~\cite{De}) and
Henselian equicharacteristic zero valued fields (Pas~\cite{Pa1}),
quantifier elimination was established by means of cell
decomposition and a certain preparation theorem (for polynomials
in one variable with definable coefficients) combined with each
other. In the latter case, however, cells are no longer finite in
number, but parametrized by residue field variables.

\vspace{1ex}

Nevertheless, all topological results about sets and functions
definable in the language of rings augmented by the valuation map
remain true even if an angular component does not exist. Indeed,
since the $K$-topology induced by the valuation $v$ as well as
closure and interior operations are $\mathcal{L}$-definable, the
concept of continuity, Lipschitz continuity etc.\ are first order
properties. Therefore elementary extensions can be used in the
study of these properties. After replacing a given ground field
with an $\aleph_{1}$-saturated elementary extension, one will thus
have an angular component at hand.

\section{A version of the \L{}ojasiewicz inequality}

First we remind the reader that Henselian valued fields of
equicharacteristic zero admit quantifier elimination and, more
precisely, elimination of $K$-quantifiers in the language of
Denef--Pas (Pas~\cite{Pa1}. (In the case of non-algebraically
closed fields, passing to the three sorts with additional two
maps: the valuation $v$ and the residue map, is not sufficient.)
Next note that every archimedean ordered group $\Gamma$ (which of
course may be regarded as a subgroup of the additive group
$\mathbb{R}$ of real numbers) admits quantifier elimination in the
Presburger language $(<,+,-,0,1)$ with binary relation symbols
$\equiv_{n}$ for congruences modulo $n>1$, $n \in \mathbb{N}$,
where $1$ denotes the minimal positive element of $\Gamma$ if it
exists or $1=0$ otherwise. Thus we can apply quantifier
elimination in the $\Gamma$-sort whenever $K$ is a Henselian rank
valued field of equicharacteristic zero.

\vspace{1ex}

Here and in the next section, we shall still need the following
easy consequence of the closedness theorem.

\begin{proposition}\label{bound}
Let $f: A \to K$ be a continuous $\mathcal{L}$-definable function
on a closed bounded subset $A \subset K^{n}$. Then $f$ is a
bounded function, i.e.\ there is an $\alpha \in \Gamma$ such that
$v(f(x)) \geq \alpha$ for all $x \in A$. \hspace*{\fill}$\Box$
\end{proposition}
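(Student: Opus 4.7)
The plan is to argue by contradiction, using the closedness theorem \cite[Theorem~3.1]{Now} as a non-archimedean substitute for compactness of the domain. After a preliminary rescaling of the $K^{n}$-coordinates, I may assume $A \subset R^{n}$: since $A$ is bounded there exists $c \in K^{*}$ with $c^{-1}A \subset R^{n}$, and replacing $f$ by the continuous $\mathcal{L}$-definable function $y \mapsto f(cy)$ on $c^{-1}A$ does not affect the conclusion.

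Assume toward contradiction that $f$ is unbounded, so that for every $\alpha \in \Gamma$ there is $x \in A$ with $v(f(x)) < \alpha$. Consider the definable set
$$
   C := \{(y,x) \in K \times R^{n} : x \in A,\ y \cdot f(x) = 1\}.
$$
Since $A$ is closed in $R^{n}$ and $f$ is continuous, the equation $yf(x)=1$ cuts out a closed subset of $K \times A$, hence $C$ is closed in $K \times R^{n}$. The closedness theorem now applies: the projection $K \times R^{n} \to K$ (which forgets exactly the $R^{n}$-coordinates) sends closed definable sets to closed sets. Therefore
$$
   \pi_{1}(C) = \{1/f(x) : x \in A,\ f(x) \neq 0\}
$$
is closed in $K$.

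The contradiction is then immediate. By the unboundedness hypothesis, for every $\beta \in \Gamma$ there exists $x \in A$ with $f(x) \neq 0$ and $v(1/f(x)) = -v(f(x)) > \beta$, so $0$ lies in the closure of $\pi_{1}(C)$. Closedness forces $0 \in \pi_{1}(C)$, which would require $1/f(x) = 0$ for some $x \in A$ — absurd.

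The only real step is choosing the auxiliary set $C$ together with the reduction $A \subset R^{n}$ so that the closedness theorem can be invoked with the $R^{n}$-factor playing the role of the bounded coordinates to be projected out; the inversion trick $yf(x)=1$ converts the (potentially unbounded) values of $f$ into values lying in $R$, converting unboundedness of $f$ into a forbidden accumulation at $0$. Once the set-up is correct, the argument is a one-line application of \cite[Theorem~3.1]{Now}.
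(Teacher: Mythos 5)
Your proof is correct and is exactly the sort of ``easy consequence of the closedness theorem'' the paper intends --- the paper in fact omits the argument entirely (the statement ends with a bare $\Box$), so your write-up supplies the missing details. The reduction to $A\subset R^{n}$, the inversion trick $yf(x)=1$ producing a closed $\mathcal{L}$-definable subset of $K\times R^{n}$, and the application of \cite[Theorem~3.1]{Now} to conclude that $\{1/f(x):f(x)\neq 0\}$ is closed and hence cannot accumulate at $0$ are all sound.
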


Now we can readily prove the following version of the
\L{}ojasiewicz inequality, which is a generalization of the one
from~\cite[Proposition~9.1]{Now}.

\begin{proposition}\label{Loj}
Let $f,g_{1},\ldots,g_{m}: A \to K$ be continuous
$\mathcal{L}$-definable functions on a closed (in the
$K$-topology) bounded subset $A$ of $R^{m}$. If
$$ \{ x \in A: g_{1}(x)= \ldots =g_{m}(x) =0 \} \subset \{ x \in A: f(x)=0 \}, $$
then there exist a positive integer $s$ and a constant $c \geq 0$
such that
$$ |f(x)|^{s} \leq c \cdot |(g_{1}(x), \ldots ,g_{m}(x))| $$
for all $x \in A$.
\end{proposition}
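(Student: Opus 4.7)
The plan is to combine three ingredients: Pas's $K$-quantifier elimination, Presburger quantifier elimination in the value group $\Gamma$, and the closedness theorem \cite[Theorem~3.1]{Now}. Writing $v(g(x)) := \min_{i} v(g_{i}(x))$, the desired inequality is equivalent to the existence of a positive integer $s$ and a $\gamma_{0} \in \Gamma$ such that
\[
   s\, v(f(x)) \;\geq\; v(g(x)) + \gamma_{0} \qquad \text{for every } x \in A,
\]
where the hypothesis $\{g = 0\} \subset \{f = 0\}$ takes care of the case $g(x) = 0$. I would introduce the $\mathcal{L}$-definable image
\[
   D \;:=\; \{(v(f(x)),\, v(g(x))) : x \in A,\ g(x) \neq 0\} \;\subset\; \Gamma \times \Gamma.
\]
By $K$-quantifier elimination $D$ is definable purely in the $\Gamma$-sort, and by the Presburger-type quantifier elimination available for archimedean ordered groups $D$ is \emph{semilinear}, i.e., a finite Boolean combination of linear (in)equalities and, if $\Gamma$ is discrete, congruence conditions. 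By Proposition~\ref{bound}, $f$ and each $g_{i}$ are bounded on $A$, so $D \subset [\alpha_{0}, \infty) \times [\alpha_{0}, \infty)$ for some $\alpha_{0} \in \Gamma$.

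The goal then reduces to showing that $D$ is contained in some half-plane of the form $\{(\alpha, \beta) : \beta \leq s\alpha - \gamma_{0}\}$. A standard analysis of the recession cones of the finitely many semilinear cells composing $D$ shows that this inclusion holds iff $D$ contains no \emph{vertical ray}, i.e., no sequence $(\alpha_{n}, \beta_{n}) \in D$ with $\alpha_{n}$ bounded above and $\beta_{n}$ unbounded. Indeed, any recession direction of a semilinear cell is rational; the lower bound on $\alpha$ rules out directions with negative first coordinate; and the failure of the half-plane inclusion for every $s$ forces the remaining ``too-steep'' direction $(0, 1)$ to lie in the recession cone of some cell, which in turn produces a vertical ray in $D$.

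The absence of a vertical ray is then obtained from the closedness theorem. Suppose, toward contradiction, that there exist $M \in \Gamma$ and points $x_{n} \in A$ with $v(f(x_{n})) \leq M$ and $v(g(x_{n})) \to \infty$. The set $Y := \{x \in A : v(f(x)) \leq M\}$ is closed in the $K$-topology (by continuity of $f$) and bounded (as $A$ is); after rescaling by a suitable power of an element of positive valuation I may assume $Y \subset R^{n}$. The graph $\{(x, y) \in Y \times K^{m} : y = g(x)\}$ is a closed $\mathcal{L}$-definable subset of $R^{n} \times K^{m}$, so by the closedness theorem its projection $g(Y)$ onto $K^{m}$ is closed. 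Since $g(x_{n}) \to 0$ this forces $0 \in g(Y)$, yielding some $x^{*} \in Y$ with $g(x^{*}) = 0$; by hypothesis $f(x^{*}) = 0$, contradicting $v(f(x^{*})) \leq M < \infty$. The principal obstacle in this plan is the middle step: one has to exploit the semilinear structure of $D$ carefully to turn ``not contained in any line $\beta = s\alpha - \gamma_{0}$'' into ``contains a vertical ray''. The final closedness-theorem step is then comparatively routine but crucially uses the boundedness of $A$, which lets us project along $R$-sort variables.
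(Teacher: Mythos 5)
Your proof is correct and follows essentially the same route as the paper's: both pass to the Presburger-definable image of $A$ in $\Gamma^{2}$ under $x \mapsto (v(f(x)), v(g(x)))$, use its semilinear structure to extract the linear bound $v(g(x)) \leq s\, v(f(x)) - \gamma_{0}$, and invoke the closedness theorem to show that $v(g)$ is bounded above where $v(f)$ is bounded above. The only cosmetic difference is that the paper bounds $v(g)$ on each level set $\{v(f)=\gamma\}$ and then treats the range $v(f) \leq \gamma_{0}$ by a separate second application of the closedness theorem, whereas you handle all sublevel sets $\{v(f) \leq M\}$ (your ``no vertical ray'' condition) in one stroke.
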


\begin{proof}
Put $g = (g_{1},\ldots,g_{m})$. It is easy to check that the set
$$ A_{\gamma} := \{ x \in A: \; v(f(x))=\gamma \} $$
is a closed $\mathcal{L}$-definable subset of $A$ for every
$\gamma \in \Gamma$. By the hypothesis and the closedness
theorem~\cite[Theorem~3.1]{Now}, the set $g(A_{\gamma})$ is a
closed $\mathcal{L}$-definable subset of $K^{m} \setminus \{ 0
\}$, $\gamma \in \Gamma$. The set $v(g(A_{\gamma}))$ is thus
bounded from above, i.e.\
$$ v(g(A_{\gamma})) \leq \alpha (\gamma) $$
for some $\alpha(\gamma) \in \Gamma$. By elimination of
$K$-quantifiers, the set
$$ \Lambda := \{ (v(f(x)),v(g(x))) \in \Gamma^{2}: \; x \in A \}
   \subset \{ (\gamma,\delta) \in \Gamma^{2}: \; \delta \leq \alpha(\gamma) \} $$
is a definable subset of $\Gamma^{2}$ in the Presburger language,
and thus it is described by a finite number of linear inequalities
and congruences. Hence
$$ \Lambda \, \cap  \{ (\gamma,\delta) \in \Gamma^{2}: \; \gamma > \gamma_{0} \}
   \subset \{ (\gamma,\delta) \in \Gamma^{2}: \; \delta \leq s \cdot \gamma \} $$
for a positive integer $s$ and some $\gamma_{0} \in \Gamma$. We
thus get
$$ v(g(x)) \leq s \cdot v(f(x)) \ \ \text{if} \ \ x \in A, \, v(f(x))> \gamma_{0}. $$
Again, by the hypothesis, we have
$$ g(\{ x \in A:  \ v(f(x)) \leq \gamma_{0} \}) \subset K^{m} \setminus \{ 0
   \}. $$
Therefore it follows from the closedness theorem that the set
$$ \{ v(g(x)) \in \Gamma: \ v(f(x)) \leq \gamma_{0} \} $$
is bounded from above, say, by a constant $\delta$. Hence and by
Proposition~\ref{bound}, we get
$$ s \cdot v(f(x)) - v(g(x)) \geq \beta := \min \, \{ 0, s \cdot \alpha - \delta \} $$
for all $x \in A$. This is the desired conclusion formulated in
terms of valuation with constant $c := \exp (- \beta)$.
\end{proof}

\section{Proofs of the main results}

\emph{Proof of Proposition~\ref{Hol}.} Apply Proposition~\ref{Loj}
to the functions
$$ f(x) - f(y) \ \ \text{and} \ \ g_{i}(x,y)=x_{i} - y_{i}, \
   i=1,\ldots,n. $$
\hspace*{\fill}$\Box$

\vspace{1ex}

\emph{Proof of Proposition~\ref{Lip}.} Let $\mathbb{P}^{1}(K)$
stand for the projective line over $K$. Define an
$\mathcal{L}$-definable subset $E$ of $A \times A \times
\mathbb{P}^{1}(K)$ by putting $(x,z,u) \in E$ iff
$$ u = \frac{f(x)-f(z)}{x_{i}-z_{i}} $$
for some $i=1,\ldots,n$ such that $x_{i} \neq z_{i}$ and,
moreover, the value $v(u)$ is the largest from among the values
$v(w_{j})$ of the fractions
$$ w_{j} = \frac{f(x)-f(z)}{x_{j}-z_{j}},  \ \  j=1,\ldots,n, $$
with $x_{j} \neq z_{j}$. Let $\overline{E}$ be the closure of $E$
in $A \times A \times \mathbb{P}^{1}(K)$. Denote by
$$ \phi: K^{n} \times K^{n} \times \mathbb{P}^{1}(K) \to \mathbb{P}^{1}(K) $$
the canonical projection. Again, by the closedness theorem, the
image $\phi (\overline{E})$ is a closed subset of
$\mathbb{P}^{1}(K)$. But by Proposition~\ref{bound}, the function
$f$ is bounded. Therefore, since $f$ is locally Lipschitz
continuous with a fixed constant $c$, it is not difficult to
deduce that $\overline{E}$ is actually a subset of $A \times A
\times K$. Thus the image $\phi (\overline{E})$ is a subset of $K$
and a closed subset of $\mathbb{P}^{1}(K)$. Hence $\phi
(\overline{E})$ is a bounded subset of $K$, i.e.\ $v(\phi
(\overline{E})) \geq \delta$ for some $\delta \in \Gamma$. Then $d
:= \exp (- \delta)
>0$ is the Lipschitz constant we are looking for.
\hspace*{\fill}$\Box$

\section{Some problems on continuous and Lipschitz continuous
functions definable over Henselian valued fields}

Fix a Henselian rank one valued field $K$ of equicharacteristic
zero. It is more delicate to examine Lipschitz continuous
definable functions whose domains are non-closed subsets of
$K^{n}$. Then the conclusion of global Lipschitz continuity must
be replaced by that of piecewise Lipschitz continuity. One of the
most fundamental questions is just the following problem
concerning the latter property.

\vspace{1ex}

{\bf Problem~1.}
\begin{em}
Consider a semi-algebraic (or $\mathcal{L}$-definable or, more
generally, definable in a suitable language) function $f: A \to K$
with $A \subset K^{n}$, and suppose that $f$ is locally Lipschitz
continuous with a Lipschitz constant $c$. Then is $f$ piecewise
Lipschitz continuous with possibly some larger constant $d$? In
other words, does there exist a finite semi-algebraic (or
$\mathcal{L}$-definable etc.) partition $\{ A_{1},\ldots, A_{s}
\}$ of $A$ such that the restriction of $f$ to each subset $A_{i}$
is Lipschitz continuous with some constant $d$? Further, what can
one say about a new constant $d$ in comparison to $d$? Is it
possible to take the same constant $d=c$?
\end{em}

\begin{remark}
Here a suitable language may be, in the first place, one linked
with so called \emph{topological systems} in the sense of van den
Dries~\cite{Dries}. That notion seems to be very useful in
relating definable topologies and model theoretical treatment. In
particular, the language of Denef--Pas can be translated into such
a language of topological system on a given Henselian valued field
$K$ with the topology induced by the valuation (by adding the
inverse images under the valuation and angular component map of
relations on the value group and residue field).
\end{remark}

Over the real number field $\mathbb{R}$, the affirmative answer to
Problem~1 for semi-algebraic and (globally) subanalytic functions
was given by Kurdyka~\cite{Kur} and
Paru\'si\'nski~\cite{Par1,Par2}; an o-minimal version over real
closed fields was presented by Paw\l{}ucki~\cite{Paw}. Moreover,
one can take $d = M c$ where the constant $M>1$ depends only on
the dimension $n$ of the ambient affine space. Finally, let us
mention that one of Kurdyka's results, namely
\cite[Corollary~C]{Kur}, was inspired by a question of Professor
\L{}ojasiewicz.

\vspace{1ex}

Over the $p$-adic number fields $\mathbb{Q}_{p}$ (or their finite
extensions), the affirmative answer for semi-algebraic and
subanalytic functions was given by
Cluckers--Comte--Loeser~\cite[Theorem~2.1]{CCL}. Their proof
relies on a certain compatible cell decomposition for the function
$f$. It takes into account some comparison of distances from the
centers of cells in the domain and image of $f$, which is made by
means of a Jacobian property for definable functions. This result
is crucial for the theory of local density and local metric
properties of $p$-adic definable sets, developed in the
paper~\cite{CCL-1}. But it would be rather difficult to directly
deduce from that paper itself how the constant $d$ depends on $c$.

\vspace{1ex}

In the $p$-adic case, however, it is possible to take the same
Lipschitz constant $d=c$, which is no longer true over the field
of real numbers. This was established by
Cluckers--Halupczok~\cite[Theorem~1]{C-Ha}. Their approach
combines the compatible cell decomposition mentioned above and a
kind of simultaneous piecewise approximation of $f$ and its
derivative by a "monomial with fractional exponent" and its
derivative.

\vspace{1ex}

Problem~1 seems to be open in the case where $K$ is an arbitrary
Henselian rank one valued field. Observe that in this case, too,
the sets definable in the language of Denef--Pas admit
decomposition into a finite number of cells "combed" by finitely
many congruences, as demonstrated in our
paper~\cite[Corollary~2.7]{Now} and recalled below. We begin with
the concept of a cell. Consider an $\mathcal{L}$-definable subset
$B$ of $K^{n} \times \Bbbk^{m}$, a positive integer $\nu$ and
three $\mathcal{L}$-definable functions
$$ a(x,\xi),b(x,\xi),c(x,\xi): B \to K. $$
For each $\xi \in \Bbbk^{m}$ set
$$ C(\xi) := \{ (x,y) \in K^{n}_{x} \times K_{y}: \ (x,\xi) \in B,
$$
$$ v(a(x,\xi)) \lhd_{1} v((y-c(x,\xi))^{\nu}) \lhd_{2} v(b(x,\xi)), \
   \overline{ac} (y-c(x,\xi)) = \xi_{1} \}, $$
where $\lhd_{1},\lhd_{2}$ stand for $<, \leq$ or no condition in
any occurrence. If the sets $C(\xi)$, $\xi \in \Bbbk^{m}$, are
pairwise disjoint, the union
$$ C := \bigcup_{\xi \in \Bbbk^{m}} C(\xi) $$
is called a cell in $K^{n} \times K$ with parameters $\xi$ and
center $c(x,\xi)$; $C(\xi)$ is called a fiber of the cell $C$.

\begin{proposition}\label{CD}
Every $\mathcal{L}$-definable subset $A$ of $K^{n} \times K$ is a
finite disjoint union of sets each of which is a subset
$$ F := \bigcup_{\xi \in \Bbbk^{m}} F(\xi) $$
of a cell $C$ with center $c(x,\xi)$:
$$ C := \bigcup_{\xi \in \Bbbk^{m}} C(\xi) $$
determined by finitely many congruences:
$$ F(\xi) = \left\{ (x,y) \in C(\xi): \, v\left( f_{i}(x,\xi) (y -
   c(x,\xi))^{k_{i}} \right) \equiv_{M} 0, \ i=1,\ldots,s \right\}, $$
where $f_{1},\ldots,f_{s}$ are $\mathcal{L}$-definable functions
and $k_{1},\ldots,k_{s},M \in \mathbb{N}$.
\end{proposition}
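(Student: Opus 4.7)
The plan is to obtain the decomposition by combining Pas's cell decomposition theorem with Presburger quantifier elimination on the value group.

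First, I would apply Pas's cell decomposition in the Denef--Pas language to the $\mathcal{L}$-definable set $A \subset K^n \times K$, with $y$ as the distinguished $K$-variable. This produces a finite partition of $A$ into cells of exactly the shape displayed in the statement, with a center $c(x,\xi)$, bounds $a(x,\xi), b(x,\xi)$, exponent $\nu$, and angular component condition $\overline{ac}(y - c(x,\xi)) = \xi_1$. At this stage no congruences appear, so every piece is a whole cell $F=C$; the remaining task is to explain how further definable conditions on a cell, which may appear when one assembles $A$ from cells given by an arbitrary defining formula, can be captured by congruences of the stated form.

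Next, I would use Pas's elimination of $K$-quantifiers to rewrite the defining formula of $A$ as a Boolean combination of $\Gamma$-sort and $\Bbbk$-sort atoms applied to finitely many terms $v(p_j(x,y,\xi))$ and $\overline{ac}(p_j(x,y,\xi))$. Within a single cell, after possibly enlarging the residue parameter $\xi$ to record the angular components of a few auxiliary $K$-terms, each relevant $p_j$ can be factored as $f_j(x,\xi)(y - c(x,\xi))^{k_j}$ times a factor whose $\overline{ac}$ is constant on $C(\xi)$. Consequently, every surviving $\Gamma$-sort atom on $(x,y)$ becomes a Presburger condition on linear forms $v(f_j(x,\xi)) + k_j \cdot v(y - c(x,\xi))$. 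Invoking Presburger quantifier elimination on $\Gamma$, each such condition is a finite Boolean combination of linear (in)equalities and congruences modulo a common integer $M$. The linear inequalities are absorbed by further subdividing the cell---tightening the bounds $a,b$ or introducing new residue parameters to control additional $\overline{ac}$'s---while the surviving congruences take precisely the form $v(f_i(x,\xi)(y - c(x,\xi))^{k_i}) \equiv_M 0$, as required.

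The main obstacle is the middle step: verifying that, within a single Pas cell, every $\Gamma$-atom produced by $K$-quantifier elimination can really be rearranged into a linear form in $v(y - c(x,\xi))$ with coefficients $v(f_j(x,\xi))$ depending only on $(x,\xi)$. This requires iterating cell decomposition on the polynomial terms $p_j$ themselves, pushing their angular components into the parameter $\xi \in \Bbbk^m$, and ultimately relies on Pas's preparation theorem for polynomials in one $K$-variable with $\mathcal{L}$-definable coefficients to guarantee that the procedure terminates after finitely many refinements, with a uniform modulus $M$ across the resulting congruences.
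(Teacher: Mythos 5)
Your outline is essentially the intended argument: the paper itself gives no proof of Proposition~\ref{CD}, recalling it from \cite[Corollary~2.7]{Now}, where it is obtained exactly as you describe --- Pas's cell decomposition and preparation theorem to reduce each $v(p_j(x,y))$ on a cell to a term $v(f_j(x,\xi))+k_j\,v(y-c(x,\xi))$, followed by Presburger quantifier elimination in the $\Gamma$-sort, with the resulting linear inequalities absorbed into the cell bounds and the congruences surviving as the ``combing.'' Your third paragraph correctly isolates the one genuinely technical point (the preparation step), so no gap to report.
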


A subset $F \subset K^{n}$ of the form as above will be called a
\emph{combed cell} in $K^{n} \times K$. The conclusion of
Proposition~\ref{CD} may thus be rephrased as follows:

\vspace{1ex}

\begin{em}
Every $\mathcal{L}$-definable subset $A$ of $K^{n} \times K$ is a
finite disjoint union of combed cells. \hspace*{\fill}
\end{em}

\vspace{1ex}

We now turn to the problem of extending continuous rational
functions from an algebraic subvariety to a continuous rational
function on the ambient variety, which was solved in the
papers~\cite{K-N,Now}. The former paper deals with real and
$p$-adic varities and the latter with varieties over an arbitrary
Henselian rank one valued field of equicharacteristic zero.

\vspace{1ex}

Below we state the extension problem for Lipschitz continuous
rational functions, which is open as yet. It may be connected with
the open problem on extending rational functions of class
$\mathcal{C}^{p}$, $p \in \mathbb{N}$, posed at the end of
~\cite[Section~13]{Now}.

\vspace{1ex}

{\bf Problem~2.}
\begin{em}
Let $f: V \to K$ be a rational function on an algebraic subvariety
$V$ of $K^{n}$. Suppose that $f$ is Lipschitz continuous with a
constant $c$. Does $f$ extend to a rational function $F: K^{n} \to
K$ that is Lipschitz continuous with some constant $d$?  Further,
what can one say about a new constant $d$ in comparison to $d$? Is
it possible to take the same constant $d=c$?
\end{em}

\vspace{1ex}

Finally, we pose two problems concerning a Henselian analogue of
the Tietze--Urysohn extension theorem.

\vspace{1ex}

{\bf Problem~3.}
\begin{em}
Let $f: A \to K$ be an $\mathcal{L}$-definable function on a
closed subset $A$ of $K^{n}$ .

1) If $f$ is continuous, does there exist a continuous
$\mathcal{L}$-definable extension $F: K^{n} \to K$?

2) If $f$ is Lipschitz continuous with a constant $c$, does there
exist a Lipschitz continuous $\mathcal{L}$-definable extension $F:
K^{n} \to K$ with a constant $d$? Can one take the Lipschitz
constant $d = c$?
\end{em}

\vspace{1ex}

The affirmative answer to the first question is given in our
paper~\cite{Now1} being in preparation. We also expect an
affirmative answer to the second question, although we are
currently able to construct only a Lipschitz continuous extension
$F$ with the same constant $c$, but without ensuring its
definability. Note that in the realm of pure topology, a
non-archimedean analogue on extending continuous functions from an
ultraparacompact space into a complete metric space was
established by Ellis~\cite{El}.


\vspace{2ex}

\begin{small}
Institute of Mathematics

Faculty of Mathematics and Computer Science

Jagiellonian University


ul.~Profesora \L{}ojasiewicza 6

30-348 Krak\'{o}w, Poland

{\em E-mail address: nowak@im.uj.edu.pl}
\end{small}


\begin{thebibliography}{99}


\bibitem{Ch}
G.~Cherlin, {\em Model Theoretic Algebra, Selected Topics\/},
Lect.\ Notes Math.\ {\bf 521}, Springer-Verlag, Berlin, 1976.

\bibitem{CCL}
R.~Cluckers, G.~Comte, F.~Loeser, \emph{Lipschitz continuity
properties for p-adic semi-algebraic and subanalytic functions},
Geom.\ Funct.\ Anal.\ {\bf 20} (2010), 68–-87.

\bibitem{CCL-1}
R.~Cluckers, G.~Comte, F.~Loeser, \emph{Local metric properties
and regular stratifications of p-adic definable sets}, Comment.\
Math.\ Helv.\ {\bf 87} (2012), 963–-1009.

\bibitem{C-Ha}
R.~Cluckers, I.~Halupczok, \emph{Approximations and Lipschitz
continuity in p-adic semi-algebraic and subanalytic geometry},
Sel.\ Math.\ New Ser.\ {\bf 18} (2012), 825–-837.

\bibitem{De}
J.~Denef, {\em p-adic semi-algebraic sets and cell
decomposition\/}, J.\ Reine Angew.\ Math.\ {\bf 369} (1986),
154--166.

\bibitem{Dries}
L.~van den Dries, \emph{Dimension of definable sets, algebraic
boundedness and Henselian fields}, Ann.\ Pure Appl.\ Logic {\bf
45} (1989), 189--209.

\bibitem{Dries-M}
L.~van den Dries, C.\ Miller, \emph{Geometric categories and
o-minimal structures}, Duke Math.\ J.\ {\bf 84} (1996), 497--540.

\bibitem{El}
R.L.~Ellis, \emph{Extending continuous functions on
zero-dimensional spaces}, Math.\ Ann.\ {\bf 186} (1970), 114--122.


\bibitem{K-N}
J.~Koll{\'a}r, K.~Nowak, {\em Continuous rational functions on
real and $p$-adic varieties\/}, Math.\ Zeitschrift {\bf 279}
(2015), 85--97.

\bibitem{Kur}
K.~Kurdyka, \emph{On a subanalytic stratification satisfying a
Whitney property with exponent 1}; In: \emph{Real algebraic
geometry} (Rennes, 1991), Lect.\ Notes Math.\ {\bf 1524},
Springer-Verlag, Berlin 1992, 316–322.


\bibitem{Now}
K.J.~Nowak, {\em Some results of algebraic geometry over Henselian
rank one valued fields\/},  Sel.\ Math.\ New Ser.\ {\bf 23}
(2017), 455--495.

\bibitem{Now1}
K.J.~Nowak, \emph{Extending continuous and Lipschitz continuous
functions definable over Henselian rank one valued fields}, in
preparation.

\bibitem{Par1}
A.~Parusifiski, \emph{Lipschitz properties of semianalytic sets},
Ann.\ Inst.\ Fourier, 38 (1988), 189--213.

\bibitem{Par2}
A.~Parusifiski, \emph{Lipschitz stratification of subanalytic
sets}, Ann.\ Scient.\ \'{E}c.\ Norm.\ Sup.\ {\bf 27} (1994),
661--696.

\bibitem{Paw}
W.~Paw\l{}ucki, \emph{Lipschitz cell decomposition in o-minimal
structures I}, Illinois J.\ Math.\ {\bf 52} (2008), 1045--1063.

\bibitem{Pa1}
J.~Pas, {\em Uniform p-adic cell decomposition and local zeta
functions\/}, J.\ Reine Angew.\ Math.\ {\bf 399} (1989), 137--172.

\bibitem{Pa2}
J.~Pas, {\em On the angular component map modulo $p$\/}, J.\
Symbolic Logic {\bf 55} (1990), 1125--1129.



\end{thebibliography}
\end{document}